\pgfplotsset{compat=1.18}
\theoremstyle{definition}
\newtheorem{thm}{Theorem}%[section]
\newtheorem{defn}[thm]{Definition}
\newtheorem{prop}[thm]{Proposition}
\newtheorem{cor}[thm]{Corollary}
\newtheorem*{quest*}{Question}
\newtheorem{rem}[thm]{Remark}
\newtheorem{lemma}[thm]{Lemma}
\newtheorem*{conj*}{Conjecture}
\newtheorem*{probl*}{Problem}
\newcommand{\D}{\mathbb{D}}
\newcommand{\tw}{\mathrm{tw}}
\newcommand{\gd}{g_\mathrm{ds}}
\newcommand{\splitatcommas}[1]{%
  \begingroup
  \begingroup\lccode`~=`, \lowercase{\endgroup
    \edef~{\mathchar\the\mathcode`, \penalty0 \noexpand\hspace{0pt plus 1em}}%
  }\mathcode`,="8000 #1%
  \endgroup
}
\title[Twisting number is bounded below by doubly slice genus]{%On the twisting number of ribbon knots 
The twisting number
of a ribbon
knot is bounded below by its doubly slice genus}
\author{Vitalijs Brejevs}
\address{Department of Mathematics, University of Vienna, Oskar-Morgenstern-Platz 1, 1090 Vienna, Austria}
\email{vitalijs.brejevs@univie.ac.at}
\author{Peter Feller}
\address{Department of Mathematics, ETH Z\"urich, R\"amistrasse 101, 8092 Z\"urich, Switzerland}
\email{peter.feller@math.ch}
\begin{document}
\begin{abstract}
    The twisting number of a %ribbon
    knot $K$ is the minimal number of tangle replacements on the symmetry axis of $J \# -J$ for any knot $J$ that is required to produce a symmetric union diagram of $K$.
    We prove that the twisting number is bounded below by the doubly slice genus and produce examples of ribbon knots with arbitrarily high twisting number, addressing a problem of Tanaka. As an application, we determine hitherto unknown doubly slice genera of some knots with 12 crossings.
    
    % To this end, we establish a result of independent interest that the doubly slice genus of $K$ is bounded above by the oriented band move distance between $K$ and any weakly doubly slice link, which also enables us to determine hitherto unknown doubly slice genera of some knots with 12 crossings.
\end{abstract}
\maketitle
\vspace{-2em}
\section{Introduction}

We work in the smooth category throughout and all our manifolds are oriented. A knot $K$ is called a \emph{symmetric union knot}, or simply a \emph{symmetric union}, if it can be obtained by taking the connected sum $J \# -J$ of a knot $J$ 
%, called the \emph{partial knot} of $K$, 
with its mirror $-J$, and replacing trivial 4-tangles that intersect the symmetry axis of $J \# -J$ by 4-tangles twisted along the axis, called \emph{twist regions}; see Figure~\ref{fig:su}. This construction, originally proposed by Kinoshita and Terasaka in~\cite{kt:original} and developed by Lamm in~\cite{lamm:original}, determines a ribbon-immersed disc in $S^3$ bounded by $K$ that is swept out by an arc between points lying symmetrically across the axis. Symmetric unions are ubiquitous among small ribbon knots: at least 144 out of 159 ribbon knots with at most 12 crossings admit symmetric union diagrams~\cite{lamm:nonsym}; moreover, so do all ribbon 2-bridge knots~\cite{lamm:2bridge}. This prompts the following open conjecture that is central to the study of symmetric unions.

\begin{conj*}[`Ribbon--symmetric union conjecture']
\label{conj:rsu}
    Every ribbon knot is a symmetric union.
\end{conj*}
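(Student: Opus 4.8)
Since the Ribbon--symmetric union conjecture is, to the best of our knowledge, open, what follows is a strategy together with an honest account of where the real difficulty lies.

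The natural starting point is that both classes involved admit concrete combinatorial models. A knot $K$ is ribbon precisely when it arises from an $(m+1)$-component unlink by attaching $m$ embedded \emph{fusion bands}, the minimal such $m$ being the fusion number of $K$; a symmetric union is, by construction, an especially rigid such presentation, since $J \# -J$ bounds the ribbon disc swept out by the symmetric arc, and inserting twist regions on the axis keeps the result ribbon while introducing a $\mathbb{Z}/2$ symmetry of the \emph{diagram}. Thus the conjecture asks one to upgrade an arbitrary ribbon presentation of $K$ to one in which the underlying unlink and the fusion bands can be drawn symmetrically across an axis, with all asymmetry concentrated into finitely many twist regions meeting that axis.

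The plan is to induct on the fusion number $m$. The base case $m=0$ is the unknot, which is the symmetric union $U \# -U$ with no twist regions. For the inductive step one would isolate, inside a given ribbon disc with $m$ bands, a band (or pair of bands) that can be reflected into symmetric position, peeling off a twist region and leaving a ribbon knot or link of smaller fusion number; Lamm's calculus of symmetric Reidemeister moves and band manipulations is the natural tool for making such a rearrangement precise, while one tracks how a partial symmetrization of the diagram interacts with the remaining bands. A variant worth trying first is to match the algebraic data: the Seifert form of a symmetric union has a block shape built from the forms of $J$ and $-J$ plus twist contributions, so one could first show that the Seifert form and a metabolizer of any ribbon knot can be put into this shape, and only then attempt a geometric realization.

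The main obstacle is precisely the passage from combinatorial or algebraic bookkeeping to an honest geometric symmetry: there is no local move, and no invariant, currently known to detect the property of being a symmetric union, and the global structure of ribbon discs is poorly controlled --- one does not even know that a ribbon knot has a fusion-minimal ribbon disc in any useful normal form, nor how fusion number behaves under connected sum --- so there is no leverage for imposing a $\mathbb{Z}/2$ symmetry on an arbitrary ribbon disc. I would therefore expect genuine progress to come from the two extremes rather than from the conjecture itself: establishing it for ribbon knots of small fusion number (extending Lamm's treatment of $2$-bridge ribbon knots and of the $12$-crossing census), or, conversely, exhibiting a ribbon knot that is provably \emph{not} a symmetric union via a new obstruction. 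The main inequality of the present paper --- that a symmetric union of twisting number $n$ has doubly slice genus at most $n$ --- belongs to the spirit of the latter program, since it quantifies how much twisting a symmetric union presentation of a given ribbon knot must carry.
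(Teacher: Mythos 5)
You are right that this is an open conjecture: the paper does not claim a proof, and explicitly introduces it as ``the following open conjecture that is central to the study of symmetric unions.'' So there is no proof in the paper against which to compare your attempt, and your honest acknowledgement that the statement remains unresolved is exactly the correct response. Your survey of the landscape is accurate --- the fusion-band model of ribbon knots, Lamm's symmetric diagram calculus, and the absence of any known invariant detecting the symmetric-union property are all the right things to point to --- and your closing observation correctly locates the paper's own contribution: Theorem~\ref{thm:main} and Corollary~\ref{cor:820} do not bear on whether a ribbon knot \emph{is} a symmetric union, but rather show that if it is, then $\tw$ can be forced arbitrarily large by the doubly slice genus, so any eventual proof of the conjecture cannot come with a uniform bound on the number of twist regions.

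One small caution about the inductive strategy you sketch: inducting on fusion number founders immediately on the very issues you flag, since peeling off a band from a ribbon disc for a knot generally yields a ribbon \emph{link}, and the conjecture as stated is about knots; moreover the notion of ``reflecting a band into symmetric position'' presupposes a partial axis symmetry that an arbitrary ribbon presentation need not exhibit, so the inductive hypothesis would have to be strengthened to something about symmetrizable ribbon presentations of links, which is not obviously any more tractable. The algebraic variant (matching Seifert forms and metabolizers) is also unlikely to suffice on its own, since being a symmetric union is not known to be detected by the Seifert form --- indeed Lamm's examples show that the symmetric-union property depends on diagrammatic data beyond the $S$-equivalence class. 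None of this is a flaw in your write-up, which is explicit that these are obstacles rather than steps; it is merely a reason to expect, as you say, that progress will come from partial results or from a new obstruction rather than from the induction as outlined.
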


This conjecture is challenging, in particular, because the number of twist regions required to realise a ribbon knot $K$ as a symmetric union may be arbitrarily large. The smallest such number is called the \emph{twisting number}\footnote{The twisting number as defined here is also called the \emph{minimal twisting number} in the literature.} of $K$ and denoted $\tw(K)$. We set $\tw(K) = \infty$ if $K$ is not a symmetric union; in other words, a ribbon knot with $\tw(K) = \infty$ would provide a counterexample to the conjecture. 
By the construction of symmetric unions, $\tw(J \# -J) = 0$ for any knot $J$ and $\tw(K) \geq 1$ for any prime knot $K$. Twisting number one knots have been studied extensively by Tanaka~\cite{tanaka:jones,tanaka:composite,tanaka:satellite} and K\"ose~\cite{kose:amphichirality,kose:thesis}; in fact, in~\cite{tanaka:jones}, Tanaka asked whether more than one twist region is ever necessary to realise a knot as a symmetric union.

\begin{probl*}
\label{q:tanaka}
    Does there exist a symmetric union knot $K$ with $\tw(K) \geq 2$?
\end{probl*}

Via a factorisation condition for the Jones polynomial of a knot with $\tw(K) = 1$, it was shown in~\cite{tanaka:jones} that indeed there exist knots with $\tw(K) = 2$. However, this prompts the natural question whether there are examples of knots with $\tw(K)\geq 3$.
To exhibit knots with $\tw(K)=N$ for all $N\geq 3$, we establish
a lower bound on $\tw(K)$ in terms of the {doubly slice genus} $\gd(K)$.

\begin{thm}
\label{thm:main} For all knots $K$, we have
    $\gd(K) \leq \tw(K)$.
\end{thm}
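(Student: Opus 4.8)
The plan is to exhibit $K$ as the result of $\tw(K)$ oriented band moves performed on a weakly doubly slice link, and then to apply the above-mentioned upper bound for the doubly slice genus in terms of the oriented band move distance to such a link. Assume therefore that $\tw(K)=N<\infty$, as otherwise there is nothing to prove, and fix a symmetric union diagram $D$ of $K$ with partial knot $J$ and exactly $N$ twist regions $R_1,\dots,R_N$; by the definition of $\tw$, the diagram $D$ is obtained from a symmetric diagram $D_0$ of $J\#-J$ by replacing $N$ trivial $4$-tangles on the axis with the twist regions $R_1,\dots,R_N$.

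The first point I would establish is an orientation statement. Away from its twist regions $D$ is invariant under the reflection $\rho$ in the symmetry axis, and on $D_0$ the reflection $\rho$ interchanges the two summands of $J\#-J$; hence $\rho$ reverses the orientation of the knot $J\#-J$. Consequently the two strands of $K$ that wind around each other in a twist region $R_i$ --- they cross the axis and are exchanged by $\rho$ up to the local twisting --- are anti-parallel. This anti-parallelism is exactly what makes a suitable band move at $R_i$ oriented: inside a ball meeting $K$ in $R_i$, take a band joining the two strands of $R_i$ transversally through the middle of the twist region and perform the associated saddle. The saddle cuts in two the twisted band spanned by those strands, and each resulting piece has a free edge along which its twisting can be absorbed by isotopy, so a single move removes all of the twisting of $R_i$ at once; in particular the cost is one oriented band move per twist region, no matter how much twisting $R_i$ carries.

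Carrying out this move at each of $R_1,\dots,R_N$ transforms $K$ into a link $L$ which, by inspection of the modified symmetric diagram, is built from $J$ and $-J$ by split union and connected sum along the axis --- for instance $L=J\#-J$ when $N$ is even. In every case $L$ is weakly doubly slice: the knot $J\#-J$ is doubly slice (a classical fact, already used in the symmetric union literature), which implies that the split union $J\sqcup -J$ is a cross-section of an unknotted torus --- obtained from the unknotted $2$-sphere cross-sectioned by $J\#-J$ by tubing along the arc of the defining connected sum --- and weak double sliceness is preserved under connected sum along the axis and under adjoining split unknotted components. Hence the oriented band move distance from $K$ to the weakly doubly slice link $L$ is at most $N$, and the quoted inequality gives $\gd(K)\le N=\tw(K)$.

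The genuinely hard part is the input I am invoking: the inequality bounding $\gd(K)$ by the oriented band move distance from $K$ to any weakly doubly slice link, which is the main technical result behind this theorem. Granting it, what remains is the bookkeeping of the second and third paragraphs --- confirming that in an arbitrary symmetric union diagram every twist region is flanked by anti-parallel strands, so that the twist-undoing band moves are genuinely oriented and the orientations can be tracked coherently, and identifying the resulting link $L$ precisely enough to certify that it is weakly doubly slice.
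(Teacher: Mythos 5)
The overall strategy is exactly the paper's: express $K$ as the result of $\tw(K)$ oriented band moves on a weakly doubly slice link and then apply the band-move bound on $\gd$ (Proposition~\ref{prop:boundonds}). The paper also uses the same band move, which changes each $n_i$-tangle into an $\infty$-tangle (Figure~\ref{fig:band}), so your description of the move and the orientation bookkeeping are fine and consistent with observation~(2) in Section~\ref{subsec:sudef}.

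However, there is a genuine gap in the identification of the link $L$ you land on and in the justification that it is weakly doubly slice. Each band move at a twist region replaces a non-zero integer tangle by an $\infty$-tangle, so the resulting link is the $\infty$-resolution $K_\infty = (D_J\sqcup -D_J)(\infty_{1+N})$, which by observation~(1) has $1+N$ components. Your assertion that $L$ is ``built from $J$ and $-J$ by split union and connected sum along the axis --- for instance $L=J\#-J$ when $N$ is even'' is therefore false: $J\#-J$ is a knot, while $K_\infty$ is an $(N+1)$-component link whose components are glued from arcs of $D_J$ and $-D_J$ in a pattern dictated by the positions of the bridges, and there is no reason to expect it to decompose as split unions and connected sums of $J$ and $-J$. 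Because of this, your subsequent argument (tubing the unknotted $2$-sphere for $J\#-J$, and invoking closure of weak double sliceness under adjoining split unknots) never applies to the actual link $K_\infty$. This is precisely the step where the paper invokes the theorem of McCoy and McDonald (Proposition~\ref{prop:kinfty}, which restates~\cite[Theorem~1.5]{mccoymcdonald:ds}): every $\infty$-resolution $L_\infty$ is weakly doubly slice. That is a non-trivial $4$-dimensional input, a genuine generalisation of the classical fact that $J\#-J$ is doubly slice, and without it (or an equivalent argument) the proof does not close. You would need either to cite that result or to supply a correct proof that the specific link $K_\infty$ bounds an unknotted $2$-sphere, rather than misidentify it as a knot or a split/connect-sum combination of $J$ and $-J$.
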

Here, as first defined and studied in~\cite{livingstonmeier:ds}, the \emph{doubly slice genus} $\gd(K)$ of a knot $K$, and, more generally, a link, is the minimal genus among unknotted closed surfaces $F \subset S^4$ such that $K=S^3\pitchfork F$, where $S^3$ is identified with the equatorial sphere in $S^4$.

The key input for the proof of Theorem~\ref{thm:main} is the following bound on the doubly slice genus, which we discuss in detail in Section~\ref{sec:mainproof}. It is a result of independent interest that may be viewed as an extension of the theorem of McDonald from~\cite{mcdonald:bandnumber} that the {band unknotting number} of a knot $K$ is at least $\gd(K)$.

\begin{restatable}{prop}{boundonds}
\label{prop:boundonds}
    Let $L$ be a weakly doubly slice link, i.e., $\gd(L)=0$. If $L$
    can be turned into a link $L'$ by $t$ oriented band moves, then $\gd(L')\leq t$.
\end{restatable}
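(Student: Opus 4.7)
The plan is to prove the slightly more general inequality $\gd(L') \leq \gd(L) + t$ by induction on $t$; the proposition follows by specialising to $\gd(L) = 0$. The base $t = 0$ is trivial, so the argument reduces to the $t = 1$ case: given an unknotted surface $F \subset S^4$ of genus $s$ with $F \cap S^3 = L$ and an oriented band $b \subset S^3$ taking $L$ to $L''$, construct an unknotted surface $F'' \subset S^4$ of genus at most $s + 1$ with $F'' \cap S^3 = L''$.

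The construction attaches a suitably placed $2$-dimensional $1$-handle (tube) to $F$ along the core arc $\gamma \subset S^3$ of $b$. In local coordinates near $\gamma$ in which $\gamma$ is the first axis and $S^3 = \{w = 0\}$ (with $w$ the coordinate normal to the equator), I attach the cylinder $\{(x, r\cos\theta, 0, r\sin\theta) : x \in [0,1],\, \theta \in [0, 2\pi]\}$ to $F$, removing the two small disks on $F$ bounded by its end circles. The orientation of $b$ ensures compatibility with the orientation of $F$, so $F''$ is orientable. A direct local computation gives $F'' \cap S^3 = L''$: the cylinder meets $\{w = 0\}$ in the two arcs $(x, \pm r, 0, 0)$, which form the vertical boundary $\partial_v b$, while each removed disk contributes a horizontal boundary arc of $b$ to $F \cap S^3$. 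Unknottedness of $F''$ follows by exhibiting it as the boundary of a handlebody: extending a $3$-dimensional handlebody $H \subset S^4$ with $\partial H = F$ (which exists since $F$ is unknotted) by the thickened band $N = \{(x, y, 0, w) : x \in [0, 1],\, y^2 + w^2 \leq r^2\}$ — a $3$-dimensional $1$-handle — yields $H'' = H \cup N$, a handlebody of genus at most $s + 1$ with $\partial H'' = F''$.

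The main technical hurdle is ensuring that $N$ can be attached as an embedded $1$-handle, i.e., that $N \cap H$ equals only the two attaching disks; equivalently, the arc $\gamma$ and its normal thickening must lie in $S^4 \setminus \mathrm{int}(H)$. Since $\gamma \subset S^3$, this reduces to arranging $\gamma$ to avoid the $2$-surface $H \cap S^3 \subset S^3$ away from its endpoints on $L$. I expect to handle this by exploiting the freedom in choosing $H$ — an unknotted surface bounds handlebodies on both sides in $S^4$ — together with ambient isotopies of $H$ in $S^4$ relative to $F$, possibly combined with an isotopy of $b$ in $S^3$ preserving the resulting link $L''$. This positioning-and-general-position step is the technical heart of the proof.
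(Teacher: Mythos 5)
Your construction --- attaching a $3$-dimensional $1$-handle $N$ to a handlebody $H$ bounded by the unknotted surface, with core the band's core arc $\gamma \subset S^3$ --- is exactly the paper's strategy, and you correctly identify the critical obstruction: $\gamma$ must avoid the surface $H \cap S^3$ away from $\partial \gamma \subset L$. But you do not resolve that obstruction; you only sketch a hope that ``exploiting the freedom in choosing $H$'' and some isotopies will work. That positioning step is precisely what the paper's Lemma~\ref{lem:stablemma} is for, and it is where all the real work happens.

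The difficulty is that you cannot simply move $\gamma$ off of $H \cap S^3$: both are constrained. The band core $\gamma$ is fixed (up to modifications that preserve $L'$), and isotoping $H$ rel $\partial H$ does change $H \cap S^3$, but in an uncontrolled way unless you keep track of what happens. The paper shows that pushing a subarc of $H$ past $\gamma$ replaces the Seifert surface $H \cap S^3$ by a \emph{stabilisation} of it, possibly with extra closed components $\partial(\mathrm{nb}(U))$ appearing. Moreover, even to set up those stabilisations one has to first reduce to the case that $H \cap S^3$ has a single non-closed component, and then arrange that $\gamma$ meets it with algebraic intersection number zero --- achieved not by moving $H$ but by twisting the band $b$ near its feet (the paper's Figure~\ref{fig:twist}), a move that changes $b$ but not the resulting link $L'$. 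Only then can one find subarcs of $\gamma$ that bound stabilisation arcs, remove the geometric intersections, and invoke the lemma. None of this is present in your proposal, so as written there is a genuine gap at the step you yourself flag as the ``technical heart.'' (Your reduction to proving $\gd(L') \leq \gd(L) + t$ by induction is fine and slightly more general than the paper's statement, though the paper's argument yields it as well; that part is not where the issue lies.)
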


Orson and Powell have established in~\cite{orsonpowell:signatures} that,
for all knots $K$,
\[
    \label{eq:sigbound}
    \underset{\omega \in S^1 \setminus \{1\}}{\mathrm{max}} | \sigma_\omega(K)| \leq \gd(K),
    \tag{$\dagger$}
\]
where $\sigma_\omega(K)$ is the Levine--Tristram signature of $K$ corresponding to $\omega \in S^1 \subset \mathbb{C}$. The bound~\eqref{eq:sigbound} is a refinement of a classical bound for the 4-genus $g_4(K)$ of $K$ given by $|\sigma_\omega(K)| \leq 2 g_4(K) \leq \gd(K)$ for any $\omega$ such that $\Delta_K(\omega) \neq 0$, where $\Delta_K$ is the Alexander polynomial of $K$. 
The inequality~\eqref{eq:sigbound} coupled with Theorem~\ref{thm:main} enables
us to show that there are ribbon
knots with arbitrarily high finite twisting number.\footnote{In fact, the authors' original approach to finding ribbon knots with large twisting numbers was to establish $|\sigma_\omega(K)| \leq \tw(K)$ by purely $3$-dimensional arguments.}

\begin{cor}
\label{cor:820}
    Let $K = 8_{20}$ and fix $N > 0$. Then $\tw(\#^N K) = \gd(\#^N K) = N$. 
\end{cor}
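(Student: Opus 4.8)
The plan is to pin down both invariants by establishing the chain $N \le \gd(\#^N K) \le \tw(\#^N K) \le N$ for $K = 8_{20}$, after which all three quantities must equal $N$. The middle inequality is immediate from Theorem~\ref{thm:main}, so the work lies in the two outer ones.

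For the upper bound I would first recall that $8_{20}$ admits a symmetric union diagram with a single twist region (see, e.g., \cite{lamm:nonsym}), so that $\tw(8_{20}) \le 1$; since $8_{20}$ is prime, $\tw(8_{20}) = 1$. I would then observe that the twisting number is subadditive under connected sum: glueing $N$ copies of a symmetric union diagram of a knot $J' \# -J'$ with $t$ twist regions along their symmetry axes yields a symmetric union diagram of $\#^N(J' \# -J') = (\#^N J') \# -(\#^N J')$ with $Nt$ twist regions. Hence $\tw(\#^N K) \le N\,\tw(K) = N$, and then $\gd(\#^N K) \le \tw(\#^N K) \le N$ by Theorem~\ref{thm:main}.

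For the lower bound I would use the Levine--Tristram signature, checking that $\max_{\omega \in S^1 \setminus \{1\}} |\sigma_\omega(8_{20})| = 1$: although $8_{20}$ is slice, so $\sigma_\omega(8_{20}) = 0$ for every $\omega$ with $\Delta_{8_{20}}(\omega) \ne 0$, the Alexander polynomial (which is $(1 - t + t^2)^2$ up to normalisation) vanishes at the primitive sixth roots of unity $\omega = e^{\pm i\pi/3}$, and there the Hermitian form $(1-\omega)V + (1-\bar\omega)V^T$ of a Seifert matrix $V$ of $8_{20}$ is degenerate with signature $\pm 1$ — a value computable from an explicit Seifert matrix or read off from tables of Levine--Tristram signatures. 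Since Seifert forms, and hence Levine--Tristram signatures, add under connected sum, $\sigma_\omega(\#^N K) = N\sigma_\omega(K)$ for all $\omega$, so $\max_{\omega \in S^1 \setminus \{1\}}|\sigma_\omega(\#^N K)| = N$. The Orson--Powell bound~\eqref{eq:sigbound} then gives $\gd(\#^N K) \ge N$, completing the chain.

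I expect the only delicate point to be the signature computation for $8_{20}$: one must use the \emph{non-averaged} Levine--Tristram signature — the signature of the possibly degenerate Hermitian form at a root of the Alexander polynomial — and verify that the nullity of this form at $e^{\pm i\pi/3}$ is exactly $1$ (equivalently, that a single eigenvalue curve meets zero tangentially there), so that the signature equals $\pm 1$ rather than $0$. This odd value at a root, in the presence of vanishing signatures away from the roots, is precisely the phenomenon that makes $8_{20}$ slice but not doubly slice, and it is what drives the corollary; the remaining ingredients — subadditivity of $\tw$, additivity of signatures, and the final assembly — are routine once Theorem~\ref{thm:main} and~\eqref{eq:sigbound} are available.
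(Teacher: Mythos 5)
Your proposal is correct and follows essentially the same route as the paper: a lower bound on $\gd(\#^N K)$ from additivity of Levine--Tristram signatures together with the Orson--Powell inequality~\eqref{eq:sigbound}, and an upper bound on $\tw(\#^N K)$ by connect-summing $N$ copies of a one-twist symmetric union diagram of $8_{20}$ along the symmetry axis, with Theorem~\ref{thm:main} linking the two. The only difference is presentational: where you rederive the value $|\sigma_{e^{i\pi/3}}(8_{20})|=1$ from first principles (correctly emphasising that one must take the non-averaged signature at this root of $\Delta_{8_{20}}$), the paper simply quotes the value from KnotInfo.
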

\begin{proof}
    One may calculate $\sigma_\omega(K) = 1$ for $\omega = e^{\frac{\pi i}{3}}$ (cf.~KnotInfo~\cite{knotinfo}).
    By additivity of Levine--Tristram signatures, inequality~\eqref{eq:sigbound}, and Theorem~\ref{thm:main}, we have
    $N = |\sigma_\omega(\#^N K)| \leq \gd(\#^N K) \leq \tw(\#^N K)$. On the other hand, using the one-twist symmetric union diagram for $K$ from~\cite{lamm:original} we can construct an $N$-twist diagram for $\#^N K$ by taking connected sums along the symmetry axis, hence $\tw(\#^N K) = N$.
\end{proof}

\begin{rem}
An analogous argument shows that $\tw(\#^N K)\geq N$ for any prime ribbon knot $K$ with non-vanishing Levine--Tristram signature at some root of $\Delta_K$, e.g., $10_{140}$.
Moreover, Theorem~\ref{thm:main} enables us to determine $\gd(K)$ for $K \in \{ 12n_{553},\allowbreak 12n_{556} \}$ that has been priorly only known to lie in the interval $[1, 2]$ (see~\cite{karageorghis:dsgenera}): since $K$ admits a symmetric union diagram with one twist region by~\cite[Appendix]{lamm:nonsym}, we conclude that $\gd(K) = \tw(K) = 1$.

% Moreover, using Theorem~\ref{thm:main} we can determine not only $\tw(K)$, but even $\gd(K)$ for $K \in \{ 12n_{553}, 12n_{556} \}$. It has been priorly known that $\gd(K) \in [1, 2]$ (see~\cite{karageorghis:dsgenera}); however, since $K$ admits a symmetric union diagram with one twist region by~\cite[Appendix]{lamm:nonsym}, we conclude that $\gd(K) = \tw(K) = 1$.
% admits a symmetric union diagram with one twist region by~\cite[Appendix]{lamm:nonsym}, so $\gd(K) = \tw(K) = 1$. Note that $\gd(K)$ appears to have been priorly only known to be either one or two; see~\cite{karageorghis:dsgenera}.    
\end{rem}

\begin{rem}
    Since the initial publication we were informed by William Rushworth that Proposition~\ref{prop:boundonds} follows from Corollary~14 in~\cite{bodenetal:ribbonsurfaces} by Boden, Elmacioglu, Guha, Karimi, Rushworth, Tang and Wang Peng Jun. Their proof relies on the observation that attaching 1-handles to a trivial $S^2 \subset S^4$ corresponding to the band moves on $L$ always yields an unknotted surface~\cite[Proposition~11.2]{kamada:book}. Our proof is more explicit as it keeps track of a specific handlebody that establishes unknottedness of the relevant surface in $S^4$. We expect it to generalise to the case when $L$ is the boundary of the cross-section of the equatorial $S^3 \subset S^4$ by any embedded closed connected 3-manifold rather than just $B^3 \subset S^4$.
\end{rem}

\subsection*{Organisation of the paper} In Section~\ref{sec:sudef} we define symmetric unions and provide necessary background on doubly slice knots and links. The proofs of Theorem~\ref{thm:main} and Proposition~\ref{prop:boundonds} occupy Section~\ref{sec:mainproof}.

\subsection*{Acknowledgements.} VB is supported by the Austrian Science Fund (FWF) grant `Cut and Paste Methods in Low Dimensional Topology'. PF gratefully acknowledges the support by the SNSF grant 181199.
Both authors are thankful to the winter school \emph{Winter Braids XIII}, where this project was initiated, as well as to Feride Ceren K\"ose, Charles Livingston and William Rushworth for helpful comments on the initial draft.

\section{Symmetric unions and weakly doubly slice links}
\label{sec:sudef}

The purpose of this section is to recall the definitions of symmetric unions and weakly doubly slice links, as well as to establish notation for the proof of Theorem~\ref{thm:main}.
We identify isotopic knots, links and tangles.%\pf{added this, claim it makes clearer what follows below}

\subsection{Symmetric unions}
\label{subsec:sudef}

Fix an unoriented diagram $D_J$ of a knot $J$ and reflect it about an axis $A$ in $\mathbb{R}^2 $ that does not intersect $D_J$ to get a diagram $-D_J$ of $-J$. Then, for some $k \geq 0$ choose a collection of $k+1$ disjoint embedded discs $T_0, T_1, \dots, T_k$ in $\mathbb{R}^2$ such that each $T_i$ is preserved under reflection about $A$ and intersects the disjoint union $D_J \sqcup -D_J$ in two crossingless arcs. Fix an integer $\mu$ with $1 \leq \mu \leq k+1$. Treating each $T_i$ as a rational 0-tangle, replace it by an $m_i$-tangle $T_i'$, %\footnote{$m_i$-tangle never defined, I think needs a pic}
where $m_i = \infty$ for $0 \leq i \leq \mu-1$ and $m_i \in \mathbb{Z} \setminus \{0\}$ for $\mu \leq i \leq k$. Choose an orientation on the resulting diagram and denote it by $D_L = (D_J \sqcup -D_J)(\infty_\mu, n_1, \dots, n_l)$, where $l = k - \mu + 1$ and $n_i = m_{\mu -1 + i}$, and call the tangles $T_{\mu}', \dots, T_k'$ \emph{twist regions}. The construction is illustrated in Figure~\ref{fig:su}.
\begin{figure}[h]
\centering
\begin{overpic}
    [scale=.18]{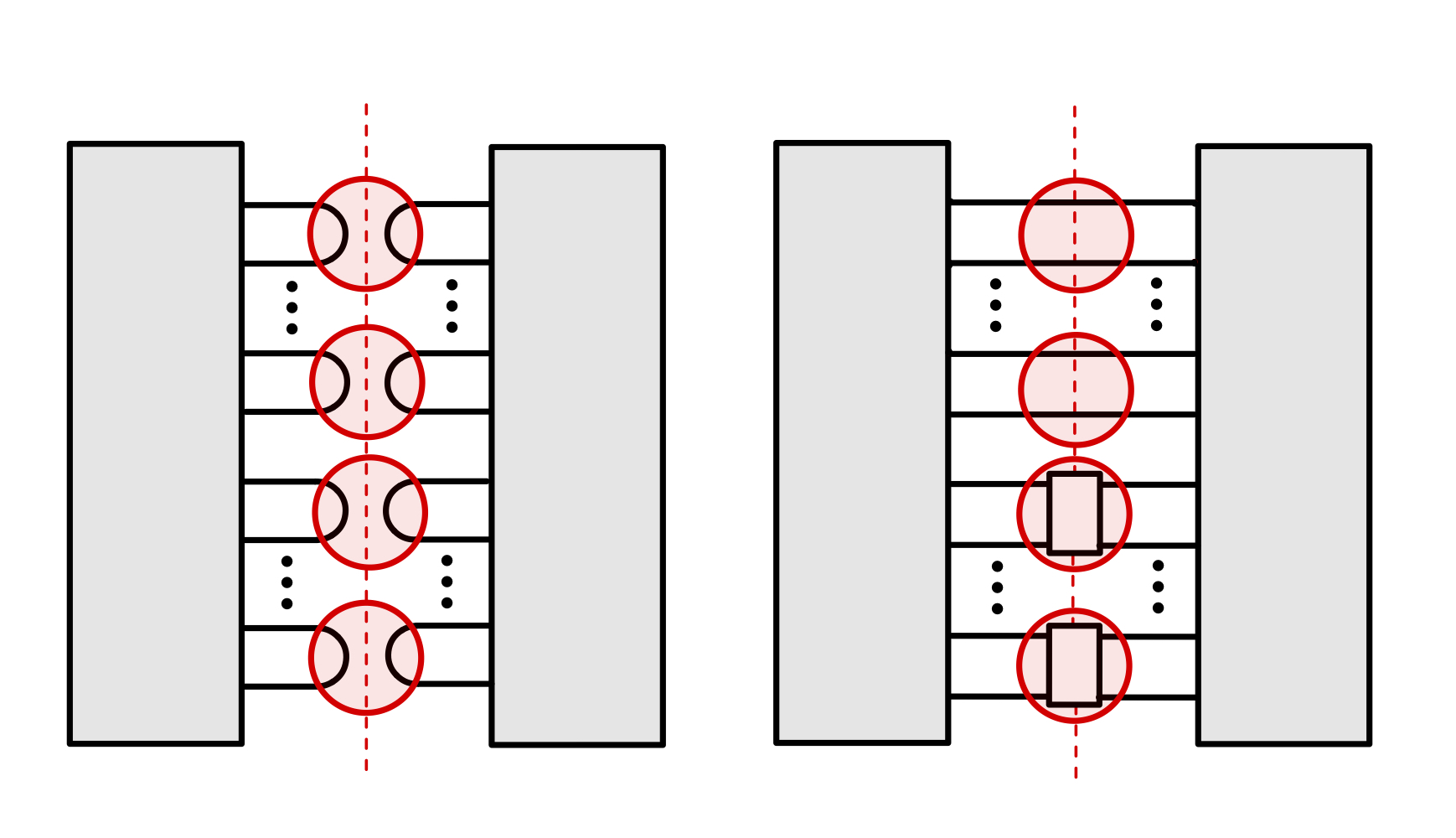}
    \put(21,46){$\scriptstyle T_0$}
    \put(21,35.8){$\scriptstyle T_{\mu-1}$}
    \put(20.5,26.4){$\scriptstyle T_{\mu}$}
    \put(21,16.5){$\scriptstyle T_{k}$}
    \put(70,46){$\scriptstyle T_0'$}
    \put(70,35.6){$\scriptstyle T_{\mu-1}'$}
    \put(68.5,26.1){$\scriptstyle T_{\mu}'$}
    \put(69.5,16.4){$\scriptstyle T_{k}'$}
    \put(72.5,21.7){$\scriptstyle n_1$}
    \put(72.5,11.4){$\scriptstyle n_l$}
    \put(8.5,25){$D_J$}
    \put(36.5,25){$-D_J$}
    \put(57,25){$D_J$}
    \put(85,25){$-D_J$}
    \put(26,47){$A$}
    \put(75,47){$A$}
    \put(47,25){$\longrightarrow$}
\end{overpic}
\vspace{-1em}
\caption{Construction of the symmetric union diagram $(D_J \sqcup -D_J)(\infty_\mu,\allowbreak n_1, \dots, n_l)$; rectangles labelled $n_1, \dots, n_l$ contain the respective number of `vertical' half-twists.}\label{fig:su}
\end{figure}

\begin{defn}
\label{def:sulink}
    \hfill
    \begin{enumerate}
        \item A link $L$ is a \emph{symmetric union} if it admits a diagram $D_L$ as above for some $J$, $\mu$, $l$ and $n_i \in \mathbb{Z} \setminus \{0\}$ for $1 \leq i \leq l$.
        \item The \emph{twisting number} $\tw(L)$ of $L$ is the minimal number of twist regions in any diagram $D_L$.
        \item The \emph{$\infty$-resolution} of $D_L$, denoted $L_\infty$, is the symmetric union represented by the diagram $(D_J \sqcup -D_J)(\infty_{\mu + l})$.
    \end{enumerate}
\end{defn}

For a symmetric union $L$, by~\cite[Remark~2.2]{lamm:original} and~\cite[Theorem~5.1]{lamm:original} we have the following:
    \begin{enumerate}
        \item the link $L$ has $\mu$ components, hence $L$ is a knot if and only if $\mu = 1$;
        \item upon assigning an orientation to $(D_J \sqcup -D_J)(\infty_\mu, n_1, \dots, n_k)$, 
        both strands at each axial crossing are oriented such that they cross $A$ in the same direction;
        \item each component of $L$ bounds an immersed disc in $S^3$ that only has intersections of ribbon type with itself and the discs bounded by other components of $L$, hence $L$ is a ribbon link.
    \end{enumerate}

%Recall that 
A \emph{band} $b$ for a link $L$ is the image of a smooth embedding $\phi\colon [0,1]\times[0,1]\to S^3$ such that $\phi$ maps $\{0,1\}\times [0,1]$ into $L$ while preserving the orientation (i.e., $\{0,1\}\times [0,1]$ carries the orientation induced from being part of the boundary of $[0,1]\times[0,1]$). By a \emph{band move} we mean embedded surgery of $L$ along $b$, namely a smoothing of
\[\left(L\setminus\phi(\{0,1\}\times [0,1])\right) \cup \phi([0,1]\times \{0,1\}); \]
we say that $b$ \emph{guides} the band move. In view of the observations~(1) and~(2) above, it is clear that a symmetric union $L$ with $l$ twist regions can be transformed into $L_\infty$ via $l$ band moves as shown in Figure%and a number of R1 moves 
~\ref{fig:band}. Note that the orientation of the resulting $L_\infty$ depends on the choice of orientation on $L$ and the parities of $n_1, \dots, n_l$.
\begin{figure}[ht]
\centering
\begin{overpic}
    [scale=.06]{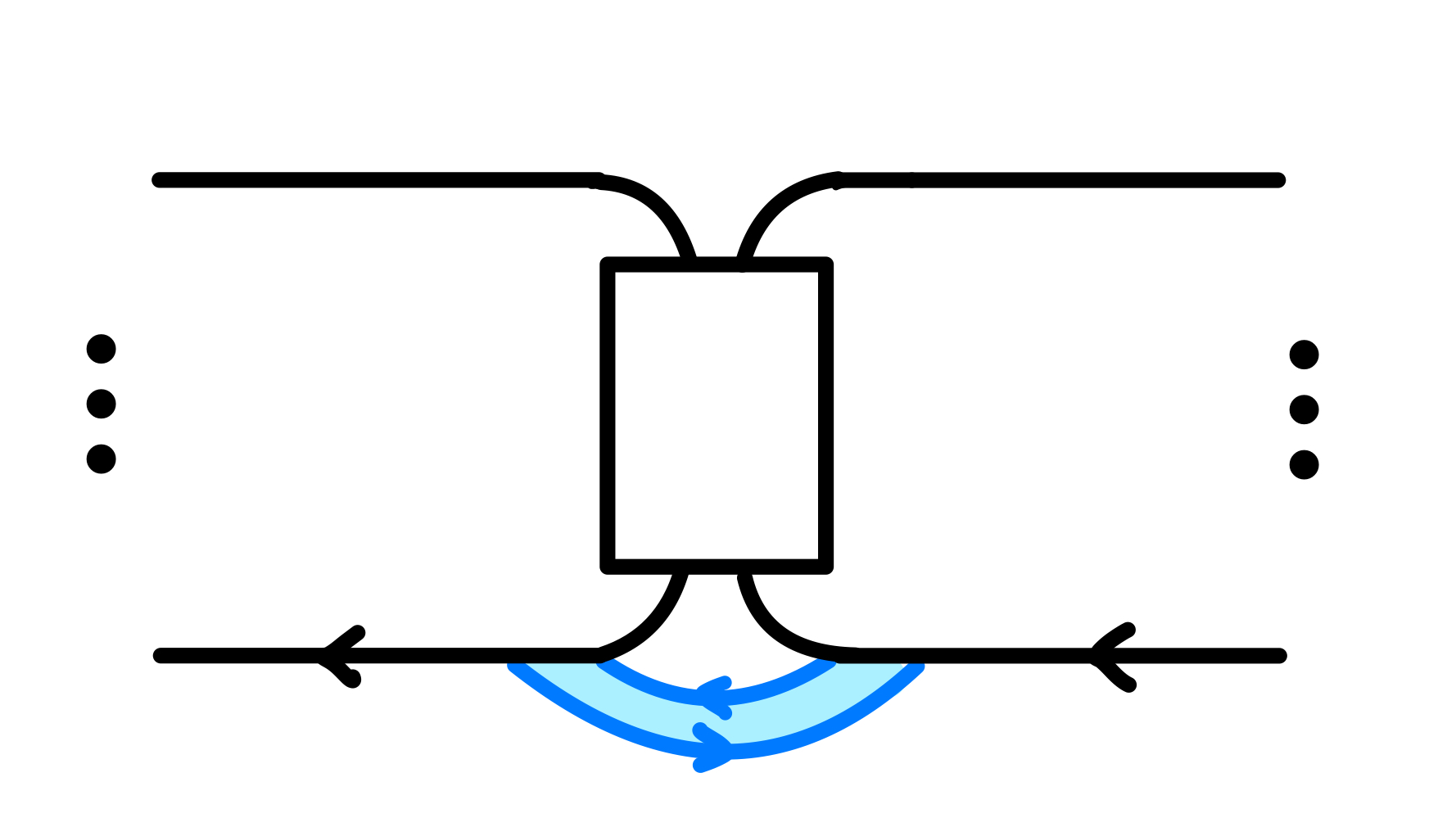}
    \put(45,25){$n_i$}
    \put(34,1){\color{blue}{$b$}}
\end{overpic}
\vspace{-0.5em}
\caption{The band move guided by the band $b$ (blue) %followed by $n_i$ applications of the R1 move
changes the $n_i$-tangle into the $\infty$-tangle. The orientation of the top strand depends on the parity of~$n_i$.}\label{fig:band}
\end{figure}

\subsection{Doubly slice links}

An oriented %$\mu$-component
link $L \subset S^3$ is \emph{weakly} %(resp., \emph{strongly}) 
\emph{doubly slice} if there exists an unknotted oriented 2-sphere $S \subset S^4$
%(resp., an oriented unlink of 2-spheres $ S_1 \sqcup \dots \sqcup S_m \subset S^4$)
such that for the equatorial $S^3 \subset S^4$ we have $S \pitchfork S^3 = L$%(resp., $S_i \pitchfork S^3 = L_i$ for all $1 \leq i \leq \mu$)
, inducing the given orientation on $L$. This extends the notion of double sliceness for knots to links: a knot $K$ is called \emph{doubly slice} if it bounds two slice discs, one in each hemisphere of $S^4$, that yield an unknotted $S^2 \subset S^4$ upon gluing along $K$, or, equivalently, if $K$ is weakly doubly slice. 
%For knots these notions coincide, hence a knot $K \subset S^3$ is \emph{doubly slice} if it bounds two slice discs, one in each hemisphere of $S^4$, that yield an unknotted $S^2 \subset S^4$ upon gluing along $K$.

A prototypical example of a doubly slice knot is the connected sum $J \# -J$ for any knot $J$: it occurs as a cross-section of the $\pm1$-twist spin of $J$ which is an unknotted 2-sphere by a classical theorem of Zeeman~\cite{zeeman:twisting}. A recent result of McCoy and McDonald from~\cite{mccoymcdonald:ds} can be viewed as a generalisation of this example to links; we give an equivalent restatement of it in terms of our objects of interest.

\begin{prop}[{\cite[Theorem~1.5]{mccoymcdonald:ds}}]
\label{prop:kinfty}
    Every $\infty$-resolution $L_\infty$ %of any symmetric union link $L$
    is weakly doubly slice.
    %for every choice of orientation on $L_\infty$.
\end{prop}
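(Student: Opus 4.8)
The goal is to show that an $\infty$-resolution $L_\infty$, represented by the diagram $(D_J \sqcup -D_J)(\infty_{\mu+l})$, is weakly doubly slice. The plan is to exhibit an explicit unknotted $2$-sphere $S \subset S^4$ whose equatorial cross-section is $L_\infty$. The starting point is the observation that the diagram $(D_J \sqcup -D_J)(\infty_{\mu+l})$ is, up to isotopy, the connected sum $J \# -J$ with all the ``twist regions'' replaced by $\infty$-tangles, i.e., the sites where the twist regions would go are resolved by arcs that run parallel to the symmetry axis $A$. Concretely, $L_\infty$ is built from $J$ and its mirror $-J$ by joining them along a collection of bands that sit symmetrically across $A$, and the axis $A$ itself gives a reflection symmetry of the whole picture that swaps $J$ with $-J$.

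The key step is to recognize this configuration as precisely the ``spinning along the symmetry axis'' picture that McCoy and McDonald analyze. The plan is to invoke \cite[Theorem~1.5]{mccoymcdonald:ds}: one forms the twist-spin-like construction by sweeping $J$ (punctured at the points where $A$ meets the diagram) around a circle while applying a full rotation, which by Zeeman's theorem \cite{zeeman:twisting} produces an unknotted sphere; the equatorial cross-section of this sphere is exactly a symmetric union of $J \# -J$ with no twist regions, which is $L_\infty$. So most of the work is translating their statement, which is phrased in a different (but equivalent) language, into our setup — matching up their description of the cross-sectional link with our $(D_J \sqcup -D_J)(\infty_{\mu+l})$. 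This is a matter of unwinding definitions: their ``cross-section of a twist-spun knot'' and our $\infty$-resolution describe the same link because in both cases one takes $J$, its mirror image $-J$, and bands them together symmetrically along an axis with no extra twisting.

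I expect the main obstacle to be the bookkeeping of \emph{orientations}. A weakly doubly slice link comes with a specified orientation induced from the sphere $S$, and Proposition~\ref{prop:kinfty} must produce $L_\infty$ with \emph{some} orientation that is realized this way — and moreover we will later need the orientation to be compatible with the one arising from orienting $L$ and doing the $l$ band moves. By observation~(2) in the excerpt, the two strands at each axial crossing of an oriented symmetric union cross $A$ in the same direction, so the $\infty$-resolution inherits a natural orientation in which each reconnecting arc is coherently oriented; one must check that this is exactly the orientation the McCoy--McDonald construction puts on the cross-section. Since their construction is the equatorial slice of a spun surface and the spinning respects the reflection symmetry, the induced orientation on the cross-section is the symmetric one, matching observation~(2). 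Once this compatibility is confirmed, the proposition follows directly from \cite[Theorem~1.5]{mccoymcdonald:ds} together with Zeeman's unknottedness theorem, with no further computation required.
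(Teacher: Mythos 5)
Your proposal takes essentially the same route as the paper: the paper offers no proof of Proposition~\ref{prop:kinfty} beyond citing~\cite[Theorem~1.5]{mccoymcdonald:ds} and describing it as ``an equivalent restatement'' in the language of $\infty$-resolutions, and you likewise cite that theorem and note that the remaining work is a translation of definitions. Your side commentary invoking Zeeman's theorem directly is a little loose --- Zeeman's result as such produces an unknotted $2$-sphere whose equatorial slice is the \emph{knot} $J\#-J$, whereas $L_\infty$ is in general a link with $\mu+l$ components, which is exactly why the McCoy--McDonald generalisation is the result actually doing the work --- but since you ultimately appeal to their Theorem~1.5 rather than to Zeeman, this is a matter of exposition rather than a gap. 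Your attention to the orientation bookkeeping (that the weakly doubly slice condition is an oriented one, and that the orientation produced must be compatible with observation~(2)) is a legitimate point that the paper leaves implicit.
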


In particular, it follows that a symmetric union link $L$ with $\tw(L) = t$ is at most $t$ oriented band moves away from a weakly doubly slice link, namely $L_\infty$.

\section{Proof of Theorem~\ref{thm:main}}
\label{sec:mainproof}

Recall that the main input for the Theorem~\ref{thm:main} is Proposition~\ref{prop:boundonds}, which states that performing $t$ band moves on a weakly doubly slice link yields a link with doubly slice genus at most $t$.

\boundonds*

The idea of the proof is to consider a $3$-ball $B\subset S^4$ such that $L = \partial B\pitchfork S^3$ and perform $t$ ambient $1$-handle attachments to $B$ whose cores are given by the cores of the bands that guide the $t$ band moves, which then guarantees that the genus~$g$ handlebody $H$ resulting from attaching the 1-handles satisfies $L'=\partial H\pitchfork S^3$. The authors first saw this strategy implemented in~\cite{chen:ds}, in a slightly different context.

A crucial technical step in realising this strategy concerns arranging that $B\subset S^4$ misses the interiors of the cores of the bands; see Case~2 in the proof below. For this we need the following lemma, which is a slight generalisation of~\cite[Lemma~3.7]{chen:ds}.

\begin{lemma}
\label{lem:stablemma}
Let $B$ be a $3$-ball in $S^4$ with $F\coloneqq B\pitchfork S^3$, and let $F_a$ be the result of stabilising $F$ along an arc $a$. Then there exists an ambient isotopy carrying $B$ to a 3-ball $B'$ that fixes the boundary such that $F'\coloneqq B'\pitchfork S^3$ is given by $F' = F_\mathcal{A}\cup \partial(\mathrm{nb}(U))$,
where $F_\mathcal{A}$ is the result of stabilising $F$ along a system of arcs $\mathcal{A}$ that contains $a$, and $U \subset S^3$ is a link in the complement of $F\cup \mathcal{A}$.
Moreover, if the arc $a$ starts and ends on the same component of $F$, then $\mathcal{A}$ can be chosen to be~$\{a\}$.
\end{lemma}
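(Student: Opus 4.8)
The plan is to work locally near the stabilising arc $a$ and track what happens to the $3$-ball $B$ under a careful isotopy. Recall that stabilising $F$ along an arc $a$ means attaching a tube to $F$ along $a$; the arc $a$ has its interior in $S^3 \setminus F$ and its endpoints on $F$. First I would fix a collar $S^3 \times (-\varepsilon,\varepsilon) \subset S^4$ of the equatorial $S^3$ and observe that, since $B \pitchfork S^3 = F$, near $S^3$ the ball $B$ looks like $F \times (-\varepsilon,\varepsilon)$ (after an isotopy rel boundary shrinking things into the collar where possible). The arc $a$ lies in $S^3 = S^3 \times \{0\}$; push a small pushoff $\tilde a$ of $a$ slightly into the positive side, so $\tilde a \subset S^3 \times \{\delta\}$ with $\partial \tilde a$ on $\partial B \cap (S^3 \times \{\delta\})$. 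The $1$-handle I want to attach to $B$ has core $\tilde a$, and the effect on the cross-section $B \pitchfork S^3$ is exactly to stabilise $F$ along $a$. So the real content is: attaching a $1$-handle to a $3$-ball along an arc in $S^4$ can be undone by an ambient isotopy, at the cost of introducing the extra sphere $\partial(\mathrm{nb}(U))$ in the cross-section, where $U$ records the ``linking'' data of the arc with the rest of the picture.

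The key steps, in order: (1) isotope $B$ rel $\partial B$ so that in the collar $S^3 \times (-\varepsilon,\varepsilon)$ it is a product $F \times (-\varepsilon,\varepsilon)$ and the arc $a$ together with the feet of the tube sits in a standard position; (2) build the ambient isotopy of $S^4$ that drags $B$ across a $2$-disc co-bounded by the core of the would-be $1$-handle — this is the move that converts $B$ into a ball $B'$ whose cross-section has picked up a tube along $a$; (3) analyse the cross-section $B' \pitchfork S^3$: a neighbourhood of the equator of the attaching sphere of the $1$-handle contributes the boundary torus/sphere $\partial(\mathrm{nb}(U))$, with $U$ an unlink-like collection of circles; then argue that $U$ can be taken to be a link in $S^3$ disjoint from $F \cup \mathcal{A}$ by general position, since everything happens in a small ball; (4) identify the full system of arcs $\mathcal A$: the isotopy may force us to also tube along auxiliary arcs coming from how $B$ had to be pushed around, but these can all be absorbed into a system $\mathcal A \ni a$; (5) for the ``moreover'' clause, observe that when $a$ has both endpoints on a single component of $F$, the obstruction to taking $\mathcal A = \{a\}$ vanishes because the tube does not need to be slid past other sheets — there is enough room to realise the stabilisation by an isotopy supported near $a$ alone, so no auxiliary arcs or auxiliary unknots are needed and $U = \emptyset$.

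The main obstacle I expect is step (2)–(3): making precise the ambient isotopy that ``absorbs'' the $1$-handle into $B$ and correctly bookkeeping the resulting cross-section. The subtlety is that $\partial B$ is a $2$-sphere in $S^4$ and attaching a $1$-handle to $B$ changes $\partial B$ into a genus-one surface, so strictly speaking $B'$ is not a ball — one must either phrase the lemma so that $B'$ is the handlebody $H$ from the outline (matching how it is used in the proof of Proposition \ref{prop:boundonds}) or keep $B'$ a ball by a compensating move and accept the extra sphere $\partial(\mathrm{nb}(U))$ in the slice. I would follow the treatment in \cite[Lemma~3.7]{chen:ds} closely here and only spell out the modification needed to (a) allow $a$ to run between two different components of $F$, which is the source of the system $\mathcal A$ rather than a single arc, and (b) keep track of orientations so the resulting surfaces are the oriented ones needed downstream. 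The rest — general position to make $U$ disjoint from $F \cup \mathcal A$, and verifying $F' = F_{\mathcal A} \cup \partial(\mathrm{nb}(U))$ — is routine once the isotopy is set up.
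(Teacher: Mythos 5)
The proposal misses the central technical device of the proof and, as a result, never actually produces the required isotopy; moreover, the claim made for the ``moreover'' clause is stronger than what is true.

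The key point you never identify is the \emph{bigon}. The isotopy in the paper (following Chen) is constructed by choosing an arc $b$ in the interior of $B$ with the same endpoints $p_0,p_1$ as $a$, so that $a\cup b$ is an embedded circle in $S^4$, and then capping this circle with an embedded disc $D$ with corners. Pushing $B$ across $D$ so that $b$ is carried slightly past $a$ is the ambient isotopy; the stabilisation of $F$ along $a$, the auxiliary arcs in $\mathcal A$, and the closed components $\partial(\mathrm{nb}(U))$ are all read off from how $D$ meets $S^3$: the arc $a$, properly embedded intervals $I_k$ with endpoints on $b\cap S^3$, and interior circles $U$. Your proposal instead speaks of attaching a $1$-handle to $B$ along a pushoff $\tilde a$ of $a$ and then ``undoing'' it by an isotopy, and you flag the resulting confusion yourself: attaching a $1$-handle turns $B$ into a genus-one handlebody, not a ball. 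That confusion is a symptom of the missing idea --- the lemma is not about a $1$-handle attachment at all; it is about an isotopy of $B$ (which of course keeps $B$ a ball), and the object that controls the isotopy is the disc $D$ with $\partial D = a\cup b$. The ``$2$-disc co-bounded by the core of the would-be $1$-handle'' in your step (2) is not a well-defined object; the disc must be cobounded by $a$ together with the arc $b\subset B$, and you never introduce $b$.

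The ``moreover'' clause is also overclaimed. You argue that when $a$ has both endpoints on one component, there are ``no auxiliary arcs or auxiliary unknots'' and $U=\emptyset$. What is true, and what the lemma asserts, is only that $\mathcal A=\{a\}$ can be arranged --- this happens because $b$ can be chosen inside a collar of the relevant component of $F$ and so meets $F$ only at $p_0,p_1$, making $n=0$ in the decomposition $D\pitchfork S^3 = a\cup\bigcup I_k\cup U$. But the interior circles $U$ come from transverse intersections of the \emph{interior} of $D$ with $S^3$, and there is no reason for these to be absent even when $b$ misses $F$; the lemma is careful to keep $\partial(\mathrm{nb}(U))$ in the conclusion precisely for this reason. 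A proof that tried to force $U=\emptyset$ would not go through.
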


We defer the proof of Lemma~\ref{lem:stablemma} to the end of this section, noting that in the case that $a$ starts and ends on the same component of $F$, the Lemma~\ref{lem:stablemma} states what Chen establishes in~\cite[Proof of Lemma 3.7]{chen:ds}.\footnote{To be very precise,
we note that~\cite[Lemma 3.7]{chen:ds} assumes that $\partial F = K$ is a {superslice} knot and that $\partial B$ is an unknotted 2-sphere that arises by doubling a slice disc for $K$; however, the argument given never makes use of these extra assumptions.} %, and the proof goes through line by line.

\begin{proof}[Proof of Proposition~\ref{prop:boundonds}]

Let $\mathcal{B} = \{ b_1,\dots, b_t \}$ be a set of disjoint bands that guide the $t$ band moves from $L$ to $L'$. Recall from Subsection~\ref{subsec:sudef} that the band $b_k$ is the image of a smooth embedding $\phi_k\colon [0,1]\times[0,1]\to S^3$ such that $\phi_k$ maps $\{0,1\}\times [0,1]$ into $L$ whilst preserving the orientation (i.e., $\{0,1\}\times [0,1]$ carries the orientation induced from being part of the boundary of $[0,1]\times[0,1]$).
%and the result of embedded surgery of $L$ along the $b_k$, i.e.~a smoothing of
%\[\left(L\setminus\bigcup_k \phi_k(\{0,1\}\times [0,1])\right)\bigcup_k\phi_k([0,1]\times \{0,1\}), \]
%is $K$. \vb{I put the definition of a band move in Section 2, could drop the details here}
Also, let $B$ be a smoothly embedded $3$-ball in $S^4$ that witnesses the double-sliceness of $L$, that is, $B$ and $\partial B$ intersect $S^3\subset S^4$ transversally and $\partial B\pitchfork S^3=L$.
We write $F\coloneqq B\pitchfork S^3$.\\

\noindent\textbf{Case 1: the bands $\mathcal{B}$ miss $F$.}
We first discuss the case when $F\cap b_k=L\cap b_k$ for all $b_k \in \mathcal{B}$; in other words, the case when $b_k$ only intersects $F$ in $\phi_k(\{0,1\}\times [0,1])$. In this case we build a genus $g$ handlebody $H \subset S^4$ by adding $1$-handles $h_k$ to $B$ as follows.

Let $\D$ denote the standard unit disc and identify a diameter of it with $[0,1]$. We define $h_k$ as the image of a smooth embedding $\Phi_k\colon\D\times [0,1]\to S^4$ such that $\Phi_k$ restricts to $\phi_k$ on $[0,1]\times [0,1]\subset \D\times [0,1]$. By construction, the handlebody $H$ resulting from attaching the $1$-handles $h_k$ to $B$ is a genus $t$ handlebody such that $\partial H\pitchfork S^3=L'$. This yields the desired result.\\

\noindent\textbf{Case 2: the bands $\mathcal{B}$ hit $F$.}
Assume now that $F$ intersects some $b_k$ away from $\phi_k(\{0,1\}\times [0,1])$. In the rest of the argument, we show that by an isotopy of $B$ in $S^4$ that fixes the boundary $\partial B$, we can arrange for $B\pitchfork S^3$ to only intersect $b_k$ in $\phi_k(\{0,1\}\times [0,1])$. Once this is achieved for every $b_k \in \mathcal{B}$, we argue as in Case~1.

We may assume that $F$ has only one non-closed component; in other words, we have $F=F_0\cup\dots\cup F_n$, where the components $F_l$ are connected oriented surfaces with $\partial F_0=L$ and $F_l$ closed for $l>0$. This can be achieved by iteratively applying the following procedure. First, stabilise a non-closed component of $F$ to another component along an arc $a$ to get a surface $F_a$. Then, apply Lemma~\ref{lem:stablemma} to isotop $B$ to a 3-ball $B'$ with $\partial B = \partial B'$ such that $B' \pitchfork S^3 = F_\mathcal{A} \cup \partial(\mathrm{nb}(U))$ with $a \in \mathcal{A}$ and $U$ some link in the complement of $F_\mathcal{A}$. Note that stabilisations never disconnect the components; thus, provided we avoid stabilising to closed tori coming from $\partial(\mathrm{nb}(U))$, this procedure eventually yields a surface with a unique non-closed component. We remark that this is the only time in the argument that we invoke Lemma~\ref{lem:stablemma} for stabilisation arcs that start and end on different components.

We may also 
%note that we can and do
assume that the core of each $b_k \in \mathcal{B} $ intersects $F_0$ algebraically zero times. This can be achieved by twisting $b_k$ around an arc that attaches it to $L$; see~Figure~\ref{fig:twist}.
%~\cite[Lemma~18]{fellerlewark:classical} for a detailed argument.
\begin{figure}[ht]
    \begin{overpic}
        [scale=.1]{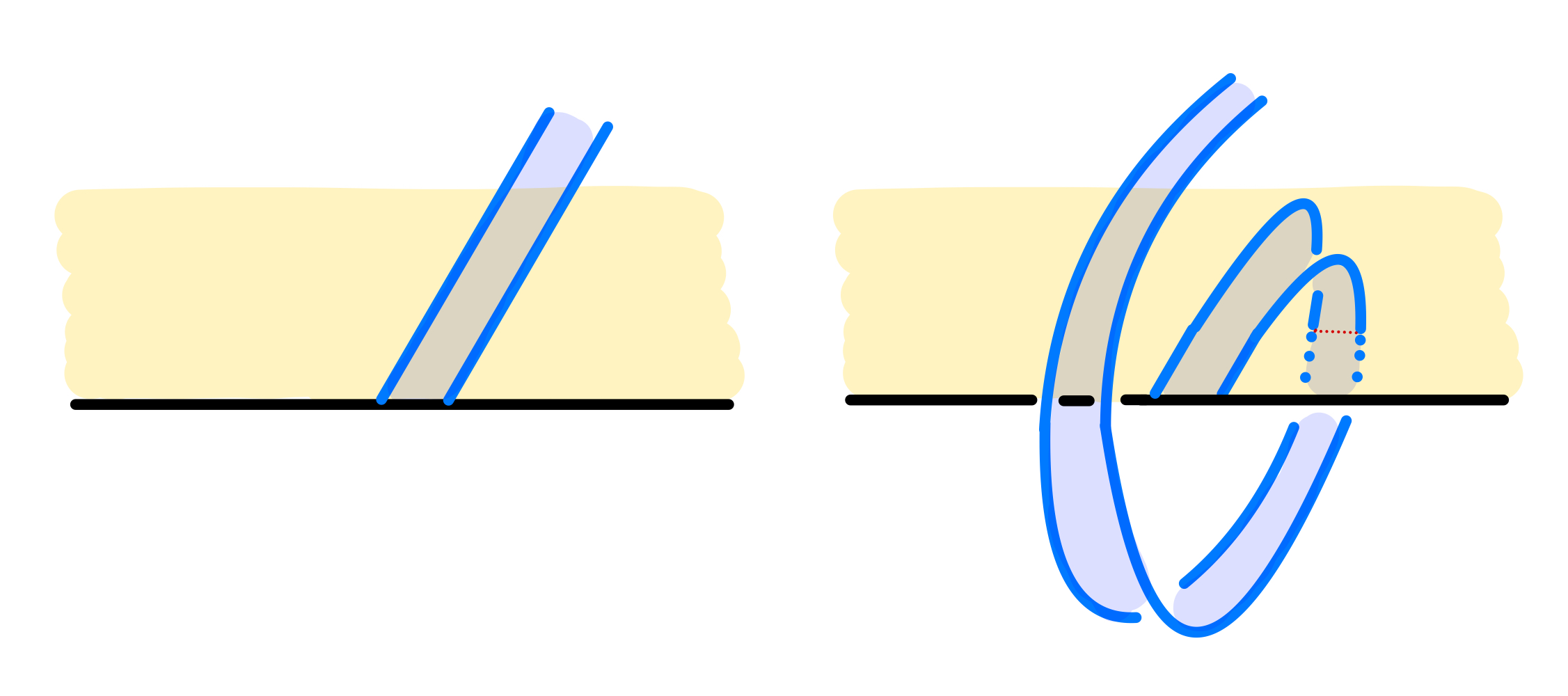}
        \put(10,13){$L$}
        \put(10,25){$\color{olive} F$}
        \put(28,35){$\color{blue} b_k$}
        \put(48,23){$\rightarrow$}
        \put(60,13){$L$}
        \put(60,25){$\color{olive} F$}
        \put(68,35){$\color{blue} b_k$}
    \end{overpic}
\vspace{-1em}
\caption{Modification of $b_k$ that changes the algebraic intersection of the core of $b_k$ with the non-closed component of $F$ without changing the isotopy class of $L'$.}
\label{fig:twist}
\end{figure}

Next, we stabilise $F$ to a surface $F'$ %with $\partial F = \partial F' = L$ 
that satisfies $F'\cap b_k=L\cap b_k$.
We do this by stabilisations that start and end at the same component of~$F$. Indeed, consider the core $I$ of~$b_k$. If~$b_k$ intersects any of the~$F_l$ for $l>0$, then one finds a subarc~$I' \subset I$ that starts and ends on the same $F_l$ and the interior of $I'$ misses~$F$. Stabilisation along $I'$ reduces the number of intersections of $I$ with $F_l$, and we may inductively arrange for $I$ to miss all $F_l$ with $l>0$ by such stabilisations. Similarly, since the algebraic intersection number between $F_0$ and $I$ is zero, if $I$ intersects $F_0$, then we can find a subarc $I' \subset I$ with endpoints on $F_0$ such that the interior of $I'$ is disjoint from $F$ and $I'$ approaches $F_0$ at the boundary from the same side (i.e., the intersection points of $I$ with $F_0$ that correspond to the endpoints of $I'$ have opposite sign).

%Once again,
By Lemma~\ref{lem:stablemma} we can find a boundary-fixing isotopy of $B$ to a 3-ball $B'$ such that $F'' \coloneqq B' \pitchfork S^3$ is the union of $F'$ and a closed surface that is the boundary of a link in $S^3$. Up to a small isotopy, $F''$ is disjoint from $b_k$. By sequentially applying the above procedure to each $b_k \in \mathcal{B}$, we may arrange that each $b_k$ intersects the resulting Seifert surface only in $\phi_k(\{0,1\}\times [0,1])$. Hence, we conclude the argument as in Case~1. \qedhere
\end{proof}

The proof of our main theorem now follows easily.

\begin{proof}[Proof of Theorem~\ref{thm:main}]
    Let $K$ be a symmetric union knot and let $(D_J \sqcup -D_J)(\infty_1,n_1,\allowbreak \dots, n_t)$ be a diagram for $K$ with $t = \tw(K)$. By Subsection~\ref{subsec:sudef}, the $\infty$-resolution $K_\infty$ of the diagram $(D_J \sqcup -D_J)(\infty_1,n_1,\allowbreak \dots, n_t)$ can be turned into $K$ via $t$ band moves.
    Since $K_\infty$ is weakly doubly slice %with any orientation
    by Proposition~\ref{prop:kinfty}, the result follows by Proposition~\ref{prop:boundonds}.
\end{proof}

It remains to prove Lemma~\ref{lem:stablemma}. We invite the reader to compare the proof we give below with the proof of~\cite[Lemma 3.7]{chen:ds}, where the objects of interest are given by more explicit parametrisations.

\begin{proof}[Proof of Lemma~\ref{lem:stablemma}]
Let $a$ be a stabilisation arc for $F$, i.e.,~$a$ is an arc in $S^3$ with endpoints $p_0$ and $p_1$ in $F\setminus \partial F$ and no further intersections with $F$. Furthermore, $a$ is transversal to $F$ and the two orientations of $S^3$ obtained by extending the orientation of $F$ with an outward pointing tangent vector of $a$ at $p_0$ and $p_1$, respectively, agree.

Let $b$ be an embedded arc in the interior of $B$ with endpoints $p_0$ and $p_1$ such that $b$ is transversal to $F$ and the two orientations of $B$ obtained by extending the orientation of $F$ with an outward pointing tangent vector of $b$ at $p_0$ and $p_1$, respectively, agree. Note that we allow for the possibility that $b\setminus\{p_0,p_1\}$ intersects $F\setminus\partial F$. However, if $p_0$ and $p_1$ lie on the same component of $F$, we choose $b$ such that $p_0$ and $p_1$ constitute the only intersections between $F$ and $b$, e.g., by choosing $b$ to lie in a collar of said component of $F$. Also note that $b$ intersects $S^3$ an even number of times, since, by our choice of how $B$ hits $F$ at its endpoints, $b$ points into the same hemisphere of $S^4$ at both ends.

The idea is to find a disc $D$ in $S^4$ with $ \partial D = a \cup b$ and to perform an isotopy of $B$ that is the identity outside a neigbourhood of $b$ and that moves $b$ along $D$; the disc $D$ will be a bigon (i.e., a smooth manifold with two corners that is homeomorphic to a disc). We first provide more details on $D$ and then explain the effect of the isotopy.

Let $W$ be a small %smooth
neighbourhood of $B\cup a$, and let $a'$ and $b'$ be push-offs (normal to $S^3$ and $B$) of $a$ and $b$, respectively, such that $a' \cup b'$ is a smooth curve $l\subset \partial W$.\footnote{Here, we are using that $a$ and $b$ both hit $F$ from the same side.} Let $A \subset W$ be the annulus with two corners that is traced out by the push-off of $a \cup b$ to $a' \cup b'$; the annulus $A$ shall serve as a collar of $\partial D$ in $W$.
See left-hand side of Figure~\ref{fig:D}.
\begin{figure}[ht]
\centering
\begin{subfigure}{.3\textwidth}
    \begin{overpic}
        [scale=.1]{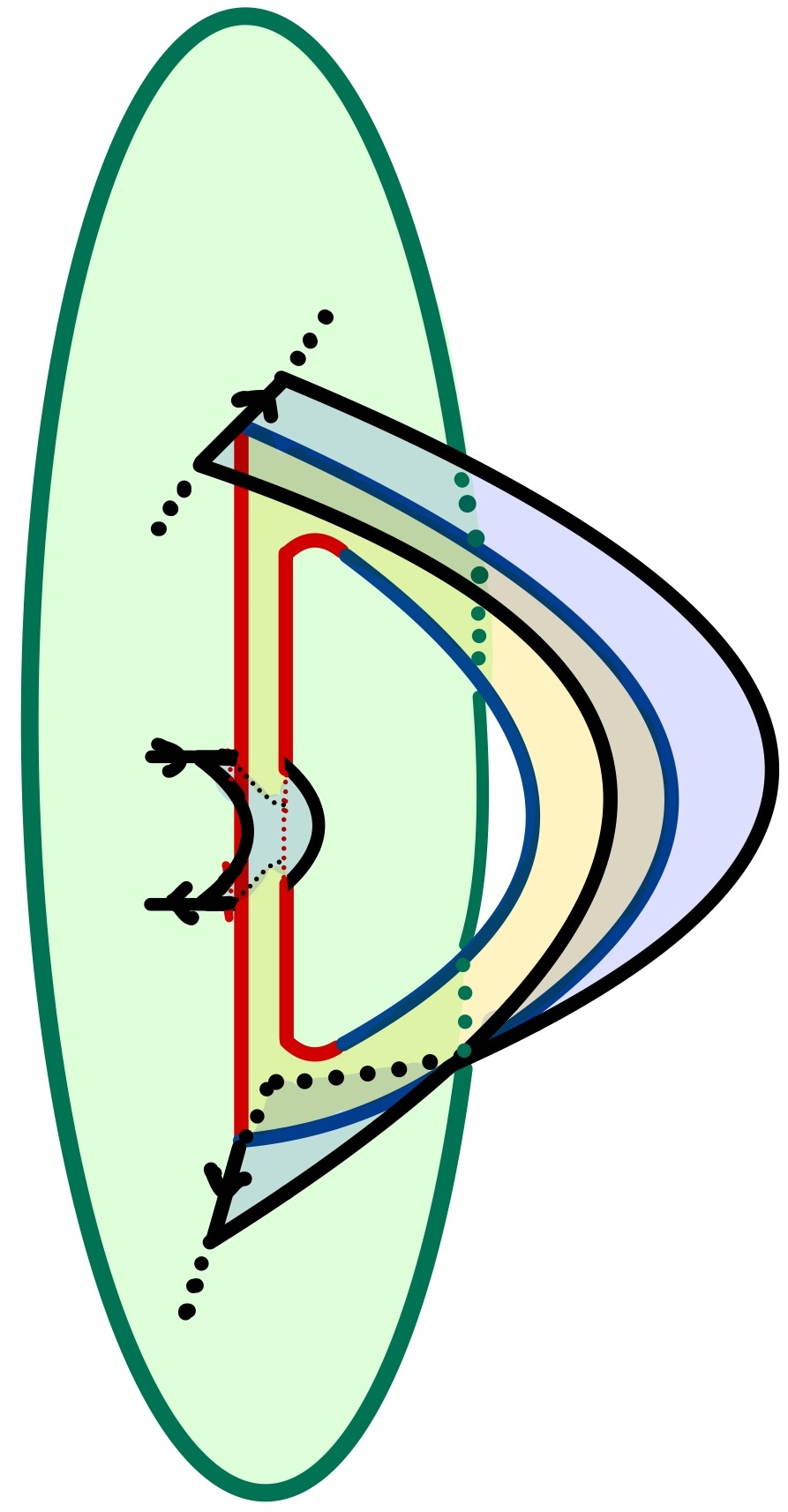}
        \put(46,46){$\scriptstyle \color{blue} a$}
        \put(36,46){$\scriptstyle \color{blue} a'$}
        \put(20,55){$\scriptstyle \color{red} b'$}
        \put(12,55){$\scriptstyle \color{red} b$}
        \put(21,35){$\scriptstyle \color{olive} A$}
        \put(14,86){$\scriptstyle B$}
    \end{overpic}
\end{subfigure}
\begin{subfigure}{.3\textwidth}
    \begin{overpic}
        [scale=.1]{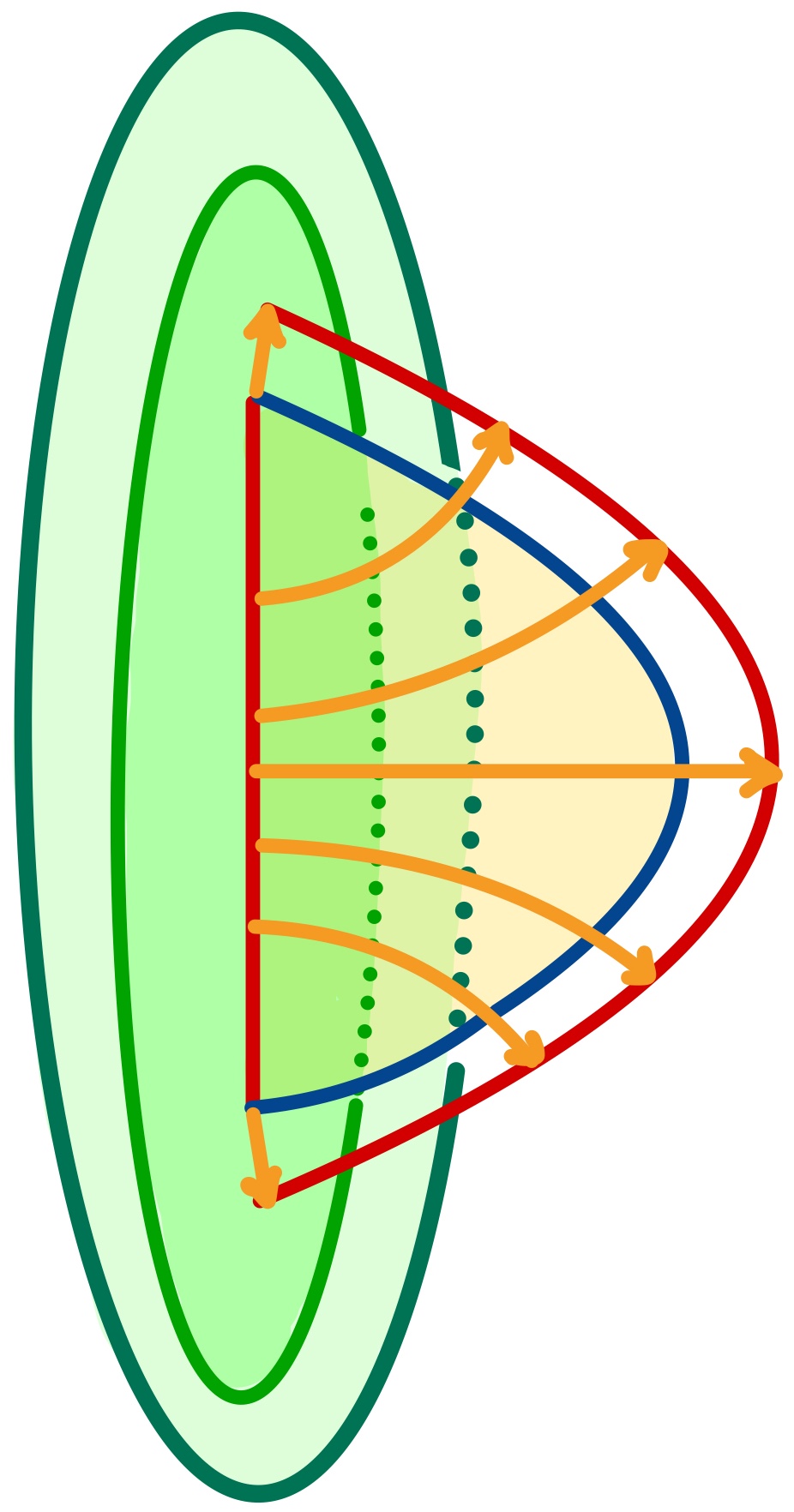}
        %\put(19,86){$\scriptstyle B$}
        \put(39,68){$\scriptstyle \color{red} \Psi(b)$}
        \put(38,51){$\scriptstyle \color{olive} D$}
    \end{overpic}
\end{subfigure}
\begin{subfigure}{.3\textwidth}
    \begin{overpic}
        [scale=.1]{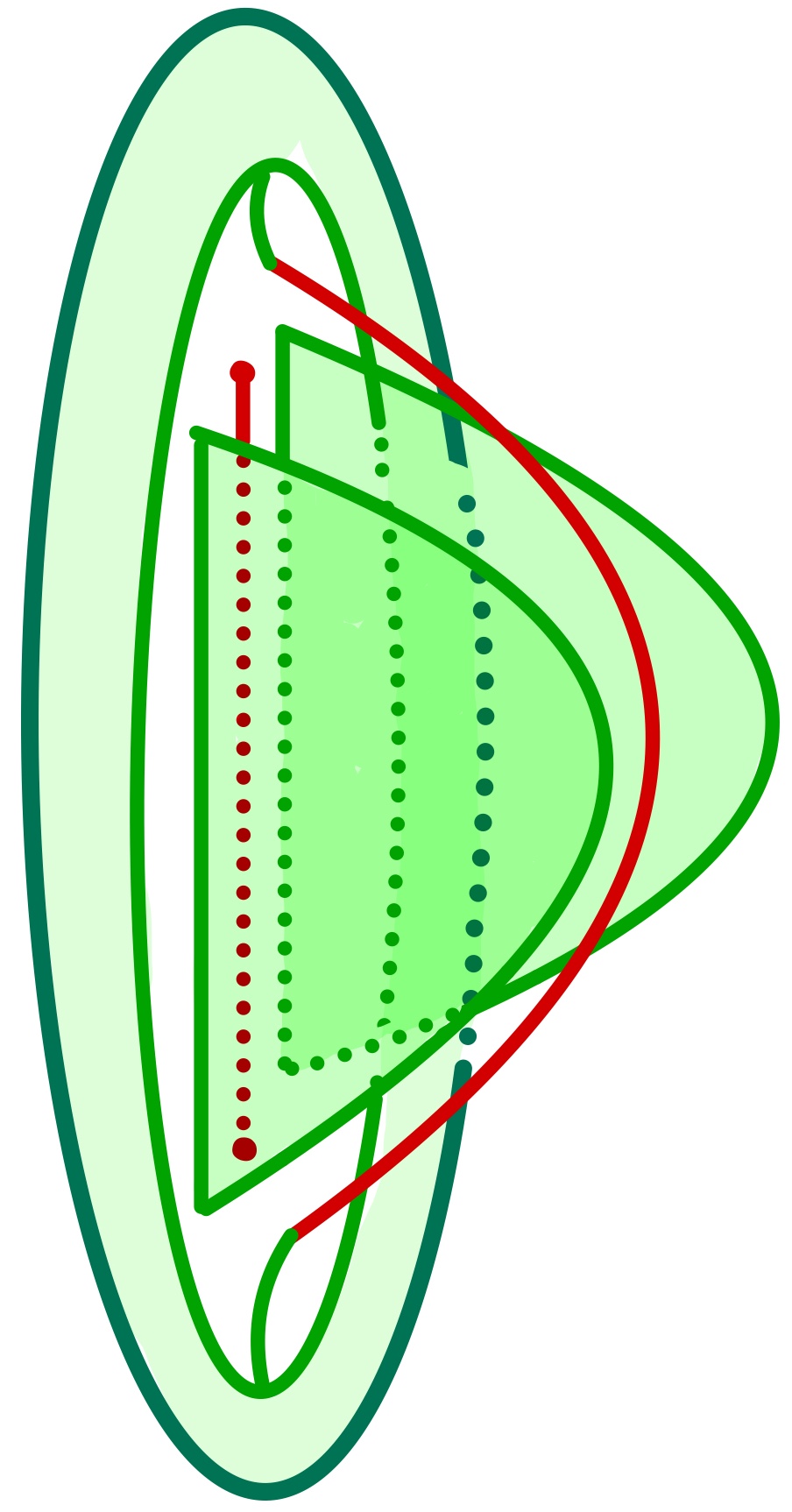}
        \put(33,73){$\scriptstyle \color{red} \Psi(b)$}
        \put(17,55){$\scriptstyle \color{red} b$}
    \end{overpic}
\end{subfigure}
\caption{Dimension reduced illustration of $D$, $B$ and $B'$.
\newline
Left: $B$ (green)%, drawn as a two-disc due to dimension reduction)
, part of $S^3$ (light blue), part of $F=B\cap S^3$ (black), $a$ and $a'$ (blue), $b$ and $b'$ (red), $A$ (yellow).
\newline
Middle: the bigon $D$ (yellow), the flow
$\Psi$ of $b$ under the vector field $v$ (orange arrows). %The support of the vector field $v$ is contained in a neighbourhood of $b$ (its intersection with $B$ is indicated darker green).
\newline
Right: $B'$, explicitly drawn are the subsets $B\cap B'$, $\phi(D\times \partial \D)$ (green%, $\partial \D$ is drawn as a $0$-sphere due to dimension reduction
), and $\Psi(b)$ (red).}
\label{fig:D}
\end{figure}

Noting that $W$ is diffeomorphic to $S^1\times D^3$ and using that every embedded $S^1$ in $S^4$ is the boundary of an embedded disc, we find an embedded $D'\subset S^4$ with $\partial D'=D'\cap W\subset \partial W$ that is transversal to $\partial W$ such that $\partial D'$ is a section of the normal projection of $\partial W$ to the core of $W$. Since $l$ and $\partial D'$ are isotopic in $\partial W\cong S^1\times S^2$ (e.g., by the 3-dimensional light bulb trick), we may assume (by an isotopy of $D'$) that $\partial D'=l$. We extend this to a smooth bigon with corners $D=D'\cup A$ that has $a\cup b$ as its boundary; see middle of Figure~\ref{fig:D}.

We further arrange for $D$ to intersect $S^3$ transversally. The intersection $D\pitchfork S^3$ consists of a union of circles $U$ in the interior of $D$, the arc $a$, and a (possibly empty) union $\bigcup_k^n I_k$ of properly embedded intervals with endpoints on $b$. Note that $a$ is the only interval in this intersection (i.e., $n=0$) if and only if $b$ is disjoint from $F$ away from $p_0$ and $p_1$.

The result $B'$ of an isotopy of $B$ that is the identity outside a neighbourhood of $D$ and pushes $b$ along $D$ slightly past $a$ is readily seen to have the following intersection with $S^3$. %effect on $F$.
The surface $F'\coloneqq B'\pitchfork S^3$ is a stabilisation of $F$ along the arcs $a$, $I_1$, $\dots$, $I_n\subset S^3$ together with the boundary of a small neighbourhood of the link $U\subset S^3$. %In the next paragraph, we provide a more detailed argument.
To see this more explicitly, we pick a vector field $v$ that is non-zero in a small neighbourhood of $D$ as follows. On $D$, the vector field $v$ is tangential to $D$ and normal to $b$, and its time one map $\Psi\colon S^4\to S^4$ maps $b$ slightly past $a$; see middle of Figure~\ref{fig:D}. Since the collar $A$ of $D$ is normal to $B$ and since $D$ intersects $S^3$ transversally, we find an embedding $\phi\colon D\times \D\to S^4$ with image $N(D)$ such that $\phi|_{a\times \D}$ (resp., $\phi|_{b\times \D}$) parametrises a cylinder in $S^3$ (resp., $B$) with the core $a$ (resp., $b$); in other words, up to smoothing corners, the trivialisation of the embedded normal bundle of $D$ induces a trivialisation of the embedded normal bundle of $a\cup b$ in $S^3\cup B$.
Furthermore, by transversality, we may arrange that
\[
S^3\cap N(D)= \phi((S^3\cap D)\times \D)=Z_a\cup \bigcup_{k=1}^n
Z_k
\cup \mathrm{nb}(U)
,\]
where $Z_a\coloneqq \phi(a\times \D)$ and $Z_k\coloneqq \phi(I_k\times \D)$ are cylinders in $S^3$ with the cores $a$ and $I_k$, respectively, and $\mathrm{nb}(U)
\coloneqq \phi(U \times \D)$ is a neighbourhood of $U$ in $S^3$; see Figure~\ref{fig:N(D)}.
\begin{figure}[ht]
    \begin{overpic}
        [scale=.11]{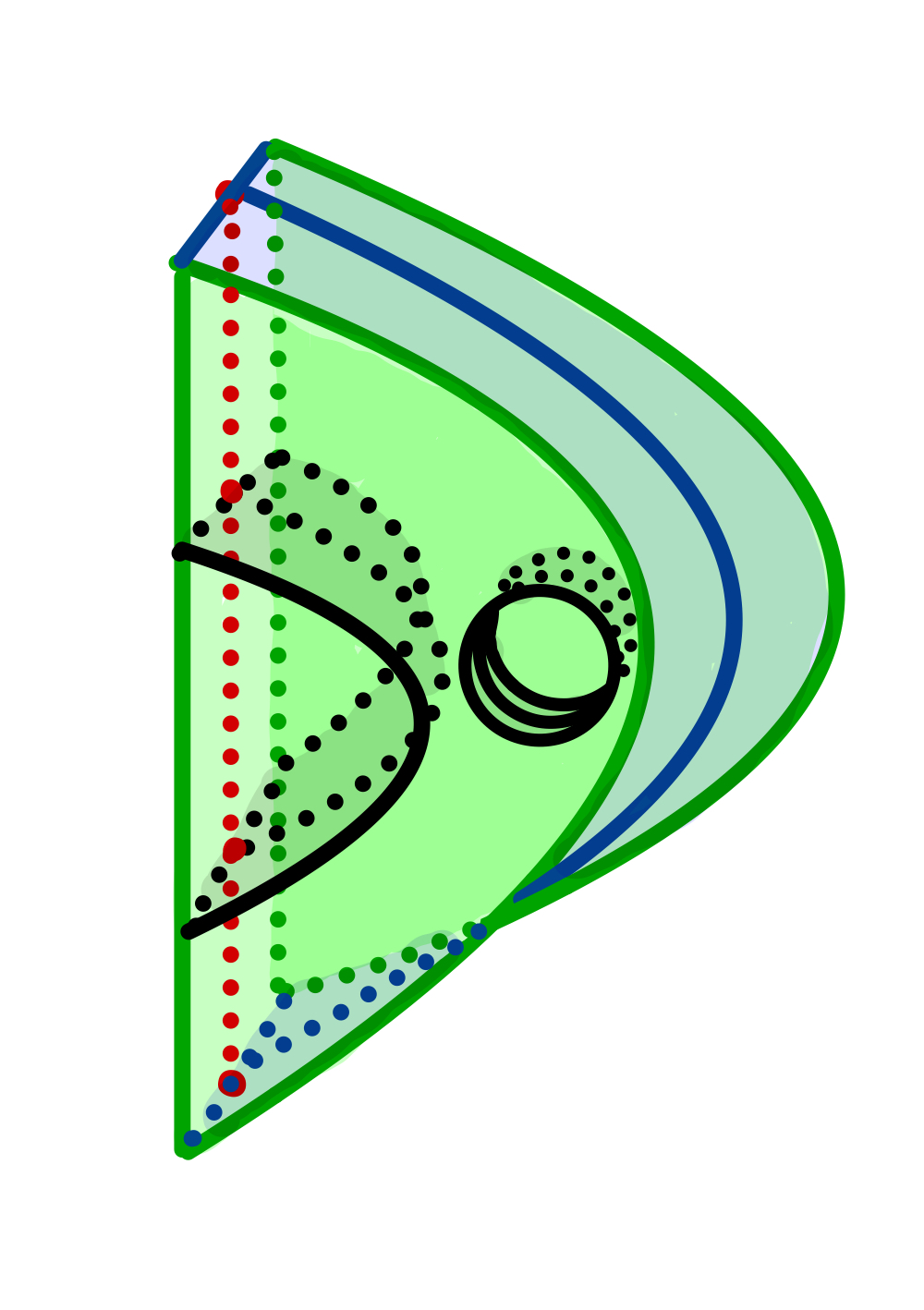}
        \put(57,55){$\scriptstyle \color{blue} Z_a$}
        \put(22,55){$\scriptstyle Z_k$}
        \put(38,59){$\scriptstyle \mathrm{nb}(U)$}
        \put(40,49){$\scriptstyle U$}
    \end{overpic}
\vspace{-1em}
\caption{Dimension reduced illustration of $N(D)$ and
\newline
$S^3\cap N(D)=\phi(D\cap S^3\times\D)=Z_a\cup\bigcup_{k=1}^nZ_k\cup\mathrm{nb(U)}$.}
\label{fig:N(D)}
\end{figure}

With this notation, one may describe the vector field $v$ in terms of the coordinates given by $\phi^{-1}$ or a slight extension of it with the domain containing the whole neighbourhood of $D$.
A natural choice is to set $v(x,y)=v(x,0)$ for all $x\in D$ and $y\in \D$ in coordinates given by $\phi^{-1}$, or, in other words, to extend $v$ normally from how it is chosen on $D$, and then quickly let it become zero outside of $\phi(B\times\D)$. However, to be able to explicitly write $F'$, we modify $v$ slightly such that $B'\coloneqq \Psi(B),$ where $\Psi$ is the time one flow of $v$, satisfies
$B'\cap N(D)=\phi(D\times \partial \D)$%(up to smoothing corners close to $\phi(a\times\partial \D$))
; see right-hand side of Figure~\ref{fig:D}.
With this choice of $v$, we can describe $F'\coloneqq \partial B'\cap S^3$ explicitly (up to smoothing corners afterwards) as 
\[\left(F\setminus
 \left(B_a\cup \bigcup_{k=1}^n
B_k\right)\right)\cup M_a \cup\bigcup_{k=1}^n
M_k
\cup \partial (\mathrm{nb}(U)),\]
where $B_a\coloneqq \phi(\partial a\times \D)$ and $B_k\coloneqq \phi(\partial I_k\times \D)$ are the bases of the cylinders $Z_a$ and $Z_k$, respectively, and 
$M_a\coloneqq \phi(a\times \partial \D)$ and $M_k\coloneqq \phi(I_k\times \partial \D)$ are the mantles of $Z_a$ and $Z_k$, respectively.
Hence, indeed, $F'$ is the result of stabilising $F$ along $a$ and the arcs $I_k$ union a closed surface given by the neighbourhood of a link $U\subset S^3$.
\end{proof}
%\newpage
\printbibliography
\end{document}